\newcommand{\R}{\mathbb R}
 \newcommand{\N}{\mathbb N}
  \newcommand{\E}{\mathbb E}
\newcommand{\ve}{\varepsilon}
\newcommand \loc    {\text{loc}}
\newcommand{\dive}{{\rm div}}
\newtheorem{theorem}{Theorem}[section]
\newtheorem{proposition}[theorem]{Proposition}
 \newtheorem{remark}[theorem]{Remark}
\newtheorem{lemma}[theorem]{Lemma}
\newtheorem{definition}[theorem]{Definition}
\newtheorem{hypothesis}[theorem]{Hypothesis}
\begin{document}

\title{ $W^{1,p}$-solutions of the transport equation by stochastic perturbation. }

\author{David A.C. Mollinedo \footnote{Universidade Tecnol\'{o}gica Federal do Parana, Brazil. E-mail:  { \sl davida@utfpr.edu.br}
}}
%

\date{}

\maketitle

\textit{Key words and phrases.
 Stochastic transport equation, Stochastic characteristic method, Regularization.}



%
\begin{abstract}
We consider the stochastic transport equation with a possibly unbounded H\"older continuous vector field. Well-posedness is proved, namely, we show existence, uniqueness
and strong  stability of $W^{1,p}$-weak solutions.
 \end{abstract}
%
\maketitle

%

\section {Introduction} \label{Intro}



During decades the transport equation has attracted a lot of scientific interest. The main reason is due that several physical phenomena in fluid dynamics and kinetic equations can be modeled by the transport equation,
\begin{equation}\label{trasports}
    \partial_t u(t, x) +  b(t,x) \cdot  \nabla u(t,x)  = 0 \,,
\end{equation}
which is one of the most fundamental and at the same time the most elementary partial differential equation with applications in a wide range of problems from physics, engineering, biology, population dynamics or social science. See for instance  Lions' books \cite{lion1} and \cite{lion2} for application in  to fluid dynamics, and to see also Dafermos' book \cite{Dafermos} for more general applications of the transport equation in the domain of conservation laws.

In view that we are interested to study the Cauchy problem \eqref{trasports}, but in the stochastic case, we are going to briefly recall some of the main recent results respect this equation. Di Perna and Lions \cite{DL} have introduced the notion of renormalized solution to the equation \eqref{trasports}, that is, for every  $\beta$ suitable, $\beta(u)$ it is a solution such that
\begin{equation}\label{renord}
    \partial_t \beta (u(t, x) )+  b(t,x) \cdot  \nabla \beta ( u(t,x))  = 0 .
\end{equation}
We observe that (\ref{renord}) holds for smooth solutions, by an immediate application of the chain-rule. However, when the vector field is not
smooth, we cannot expect any regularity of the solutions, so that (\ref{renord}) is a nontrivial request when made for all bounded distributional solutions. This notion of renormalization motivates that, if the renormalization property holds, then solutions of (\ref{trasports}) are unique and stable. In this way, Di Perna and Lions \cite{DL} proved that $W^{1,1}$ spatial regularity of the vector field $b(t,x)$ (with an additional condition of boundedness on the divergence) is sufficient to ensure uniqueness of weak solutions. Later, Ambrosio \cite{ambrosio} proved uniqueness for the case of $BV$ regularity for b instead of $W^{1,1}$. In both results, the uniqueness was based on the commutator ideas. See  Ambrosio and Crippa \cite{ambrosio2} and De Lellis \cite{lellis} for a nice review on that. There are generalizations of these results, but not so far from them; for instance we can mention the works of Alberti, Bianchini and Crippa \cite{ABC} and Hauray \cite{MH} both in 2 dimensions, where the drift does not have any differentiability regularity, but with some additional geometrical conditions.

The last years, a great deal of attention has been focused on the study of stochastic transport equation
\begin{equation}\label{trasport}
 \left \{
\begin{aligned}
    &\partial_t u(t, x) +  \,   \, \big(b(t, x) + \frac{d B_{t}}{dt} \big) \nabla u(t, x) = 0,
    \\[5pt]
    &u|_{t=0}=  u_{0},
\end{aligned}
\right .
\end{equation}
$\big( (t,x) \in U_T, \omega \in \Omega \big)$, where $U_T= [0,T] \times \R^d$, for $T>0$ be any fixed real number, $(d \in \N)$, $b:\R_+ \times
\R^d \to \R^d$ is a given vector field (drift), with $\dive\, b=0$, $B_{t} = (B_{t}^{1},...,B _{t}^{d} )$ is a standard Brownian motion in $\mathbb{R}^{d}$ and the stochastic integration is taken (unless otherwise mentioned) in the Stratonovich sense.

\bigskip

We observe that, there are several situations where the stochastic problem has better behavior than deterministic one. A first result in this direction was given by Flandoli, Gubinelli and Priola \cite{FGP2}, where they obtained well-posedness of $L^\infty$-weak solutions of the Cauchy problem \eqref{trasport} for the H\"older continuous and bounded drift term, with some integrability conditions on the divergence and initial condition $u_0\in L^\infty(\R^d)$. The key tool of this work has been a differentiable stochastic flow (characteristics method) constructed and analysed by means of a special transformation of the drift of Itô-Tanaka type. Similarly, Fedrizzi and Flandoli \cite{Fre1} obtained a well-posedness result of ``weakly differentiable solutions'' under only some integrability conditions on the drift term and considering the initial datum $u_0\in \cap_{r\geq 1} W_{loc}^{1,r}(\R^d)$. Specifically, they only assumed that
\begin{equation}\label{LPSC}
    \begin{aligned}
    &b\in L^{q}\big( [0,T] ; L^{p}(\mathbb{R}^{d}) \big) \, , \\[5pt]
 \mathrm{for}  \qquad &\  p,q \in [2,\infty) \, , \qquad   \qquad  \frac{d}{p} + \frac{2}{q} < 1 \, .
\end{aligned}
\end{equation}
Later, Neves and Olivera \cite{NO} and \cite{NO1} under condition \eqref{LPSC} got, for measurable initial condition, existence and uniqueness of $L^\infty$-weak solutions for the stochastic transport/continuity equation \eqref{trasport}. In this work, the authors have used the main feature of the transport equation, which is the transportation property, to show uniqueness in a different way from the renormalization idea (which exploit commutators) used in, for example, \cite{Cat-Oli} and \cite{FGP2}. In 2013, Catuogno and Olivera \cite{Cat-Oli} proved existence and uniqueness of $L^p$-solutions for the problem \eqref{trasport} with initial condition $u_0\in L^p(\R^d)$. Here, the authors used the generalized Itô-Ventzel-Kunita formula (see Theorem 8.3 of \cite{Ku2}) and the results on existence and uniqueness for the deterministic transport linear equation (see for instance \cite{DL}). In another context Fedrizzi, Neves and Olivera \cite{Fre2}, working with the notion of ``quasiregular weak solutions'', obtained uniqueness of \eqref{trasport} when the field vectors $b\in L_{loc}^{2}$. We also mention that Mollinedo and Olivera \cite{Moli} showed uniqueness of $L^2$-weak solutions for one-dimensional stochastic transport/continuity equation with unbounded measurable drift without assumptions on the divergence. More precisely, they only assumed the vector field $b$ satisfies
$$|b(x)|\leq k(1+|x|)\,;$$
then, to prove uniqueness they have used the fact that ``one primitive of a $L^2$-weak solution'' is regular and verifies the stochastic transport equation \eqref{trasport}.  Finally, Mollinedo and Olivera \cite{Moli1} obtained well-posedness of \eqref{trasport}  with unbounded drift but in the context of weighted spaces.

\medskip

The contribution of the present paper is to prove {\it uniqueness of $W^{1,p}$-weak solutions} (see Definition \ref{defisolu1}) of the Cauchy problem \eqref{trasport} for H\"older continuous, possibly unbounded,  divergence-free drift. In particular, this result implies the {\it persistence of regularity} for initial conditions $u_0\in W^{1,p}(\R^d)$, with $1<p<\infty$. Here, as our drift term $b$ is unbounded locally H\"older continuous, then we have suitable regularity of the stochastic flow associated to this vector field $b$ (see \cite[Theorem 7]{FGP}). Thereby in the proof of our main results, using a regularization procedure, thanks to the free-divergence condition, following the same strategy introduced in \cite{Moli1} and in order to avoid ``commutators'' and the problems there in, we are able to compose the solution $u$ of the stochastic transport equation with the stochastic flow, in fact its inverse; in this way, we can bring the stochastic flow with all its space derivatives on the test function.

\medskip

In the context of the present paper we would like to point and remark the following:
\begin{enumerate}
  \item For the deterministic transport equation the problem $W^{1,p}$-solution is open, under essentially weaker conditions than Lipschitz continuity of the vector field $b$. Besides that, we would like to mention that Colombini, Luo and Rauch \cite{Colom} proved, through a specific example in $\R^2$ and considering divergence free vector fields $b\in \cap_{1\leq p< \infty} W^{1,p}(\R^2)$, that the persistence property for the deterministic case is not true even the uniqueness is established. Specifically, one may start with an initial data $u_0\in C_c^{\infty}(\R^2)$, but the deterministic unique bounded solution is not continuous on any neighborhood of the origin.
	\item Compared to \cite{FGP3}, the approach of our work to prove the uniqueness result is different because in \cite{FGP3} the authors used ideas based in ``commutator lemma'' (see  for example \cite{DL} and \cite{FGP2}) and as it was mentioned previously we avoid considering  ideas on ``commutators''. Moreover, in \cite[Theorem 6 and 7]{FGP3} the authors worked with globally H\"older continuous and bounded vector field $b\in L^{\infty}(0,T; C_b^{\alpha}(\R^d;\R^d))$  with $\alpha\in (0,1)$ and initial condition $u_0\in L^{\infty}(\R^d)$, which is not our case (see Hypothesis \ref{hyp}). Now, another difference with \cite{FGP3} is related to the persistence property. In the work \cite[Theorem 4]{FGP3} was showed the persistence of $C^1$-regularity but in the present paper we prove persistence of $W^{1,p}$-regularity with $1<p<\infty$.
	\item Now, concerning the work \cite{Moli1} to prove well-posedness of the SPDE \eqref{trasport} the authors worked in the context of weighted spaces. Indeed, they worked in the space $L^{2p}(\Omega \times [0,T]\times \R^d)\cap L^p(\Omega \times [0,T], W^{1,p}(\R^d), \mu)$ where the weight $\mu$ is the Gaussian measure in $\R^d$  defined as $\mu=e^{-|x|^2}$ (see \cite[Definition 2.1]{Moli1}). In the present paper, to show the main results, using the strategy introduced in \cite{Moli1} and thanks to the free-divergence condition on the vector field, we avoid to work in the context of weighted spaces.
	\item Finally, we would like to emphasize we are not interested in to show estimates on the flow associated to vector field $b$ because, under our conditions (see \eqref{con1}), we have good estimates that have been  already investigated in \cite[Theorem 7]{FGP}. So, we only use these properties to prove well-posedness and the persistence property of the Cauchy problem  \eqref{trasport}.
\end{enumerate}


\medskip

Throughout of this paper, we fix a stochastic basis with a
$d$-dimensional Brownian motion $\big( \Omega, \mathcal{F}, \{
\mathcal{F}_t: t \in [0,T] \}, \mathbb{P}, (B_{t}) \big)$.

\section{Preliminaries}

To establish well-posedness for the stochastic partial differential equation (SPDE) \eqref{trasport} we need to assume the following hypothesis:

\begin{hypothesis}\label{hyp}
The drift term is taken to be
\begin{equation}\label{con1}
    b\in C^{\theta}(\R^d,\R^d)\,,
\end{equation}
and
\begin{equation}\label{con2}
 \dive \, b =0\, .
\end{equation}
Besides that, for $1<p<\infty$ the initial condition satisfies
\begin{equation*}\label{conIC}
 u_0 \in  W^{1,p}(\R^d)   \, .
\end{equation*}
\end{hypothesis}

\subsection{Notations}

For any  $\theta \in (0,1)$, we denote   $C^{\theta}(\R^d;\R^d)$, $d \geq 1$ the space of the vector fields  $f:\R^d \to \R^d$  for which
\begin{align*}
[f]_{\theta}:=\sup_{x\neq y, |x-y|\leq 1} \frac{|f(x)-f(y)|}{|x-y|^\theta} < \infty\,.
\end{align*}

The space $C^{\theta}(\R^d;\R^d)$ becomes a Banach space with the norm
\begin{align}\label{eq-t2-norma para C theta}
\|f\|_{\theta}=\|(1+|\cdot|)^{-1}f(\cdot)\|_\infty+[f]_{\theta}\,,
\end{align}
where $\|\cdot\|_{\infty}$ denotes the supremum norm over $\R^d$.

\medskip

Now, we are going to recall some main results about stochastic flows. Thus, for $0\leq s\leq t$ and $x\in\mathbb{R}^{d}$, consider the following
stochastic differential equation (SDE) in $\mathbb{R}^{d}$ associated to the vector field $b$
\begin{equation}
\label{itoass}X_{s,t}(x)= x + \int_{s}^{t} b( X_{s,r}(x)) \ dr + B_{t}-B_{s}\,,
\end{equation}
where $X_{s,t}(x)= X(s,t,x)$ and $X_{t}(x)= X(0,t,x)$. Under condition (\ref{con1}), the process $X_{s,t}(x)$ is a stochastic flow of $C^{1}$-diffeomorphism (see Flandoli, Gubinelli and Priola \cite{FGP}). Moreover, the inverse flow $Y_{s,t}(x):=X_{s,t}^{-1}(x)$ satisfies the following backward stochastic differential equation
\begin{equation}
\label{itoassBac}Y_{s,t}(x)= x - \int_{s}^{t} b( Y_{r,t}(x)) \ dr - (B_{t}-B_{s}),
\end{equation}
for $0\leq s\leq t$, see Flandoli, Gubinelli and Priola \cite{FGP2}. Usually $Y$ is called the time reversed process of $X$.

\medskip

From Flandoli, Gubinelli and Priola \cite[Theorem 7]{FGP}, we also remember the following result that we are going to use in our main results: Let  $b_n\in C^{\theta}(\R^d,\R^d)$ and let $\phi^{n}$ be the corresponding stochastic flows. Assume that there exists $b\in C^{\theta}(\R^d,\R^d)$ such that $b_n-b \in C_b^{\theta}(\R^d,\R^d)$, $n\geq 1$, and $\|b_n -b\|_{C_b^{\theta}(\R^d,\R^d)}\rightarrow 0 $ as $n\rightarrow \infty$. If $\phi$ is the flow associated to $b$, then for all $p\geq 1, T>0,$ we have

\begin{equation}\label{est1}
 \lim_{n\rightarrow\infty}  \sup_{x \in \R^d}   \sup_{s\in[0,T]} \mathbb{E}[  \sup_{t \in [s,T]}  |D\phi_{s,t}^{n}(x)- D\phi_{s,t}(x)|^p] =0 ,
\end{equation}

\begin{equation}\label{est2}
 \lim_{n\rightarrow\infty}\sup_{x \in \R^d} \sup_{s \in [0,T]}\mathbb{E}\Big[  \sup_{t \in [s,T]}  \Big|\frac{\phi_{s,t}^{n}(x)- \phi_{s,t}(x)}{ 1+ |x|}\Big|^p \Big] =0 ,
\end{equation}
and
\begin{equation}\label{est3}
 \sup_{n}  \sup_{x \in \R^d}   \sup_{s\in[0,T]} \mathbb{E}[  \sup_{t \in [s,T]}  |D\phi_{s,t}^{n}(x)|^p] <  \infty.
\end{equation}
If we denote by $\psi$ the inverse flow of $\phi$, then same results are valid for the backward flows $\psi_{s,t}^{n}$ and  $\psi_{s,t}$ since
are solutions of the same SDE driven by the drifts $-b_{n}$ and $-b$, respectively.

\subsection{Definitions of weak solutions}

We present now a suitable definition of $W^{1,p}$-weak solution to equation  \eqref{trasport} to treat the problem of well-posedness under our hypothesis \ref{hyp}. We denote $ C_c^{\infty}(\R^d)$ the space of the  test functions with compact support.

\begin{definition}\label{defisolu1}
 A stochastic process $u\in L^{p}( \Omega\times[0, T], W^{1,p}(\mathbb{R}^{d}))$ is called a $W^{1,p}$-weak solution of the Cauchy problem \eqref{trasport} when:
 for any $\varphi \in C_c^{\infty}(\R^d)$, the real valued process $\int  u(t,x)\varphi(x)  dx$ has a continuous modification which is an
$\mathcal{F}_{t}$-semimartingale, and for all $t \in [0,T]$, we have $\mathbb{P}$-almost surely
\begin{equation} \label{DISTINTSTR}
\begin{aligned}
    \int_{\R^d} u(t,x) \varphi(x) dx = &\int_{\R^d} u_{0}(x) \varphi(x) \ dx
   -\int_{0}^{t} \!\! \int_{\R^d}     \, b^{i}(x) \partial_i u(s,x) \varphi(x) \ dx ds
\\[5pt]
    &+ \int_{0}^{t} \!\! \int_{\R^d} u(s,x) \ \partial_{i} \varphi(x) \ dx \, {\circ}{dB^i_s} \, .
\end{aligned}
\end{equation}
\end{definition}


\begin{remark}\label{lemmaito}
From the idea of Flandoli, Gubinelli and Priola \cite[Lemma 13]{FGP2}, we can write the problem (\ref{trasport}) in It\^o's form as follows. A  stochastic process  $u\in L^{p}( \Omega\times[0, T] ,  W^{1,p}(\mathbb{R}^{d}))$ is  a  $W^{1,p}$-weak solution  of the stochastic transport equation (\ref{trasport}) iff for every test function $\varphi \in C_{c}^{\infty}(\mathbb{R}^{d})$, the process $\int u(t, x)\varphi(x) dx$ has a continuous modification which is a $\mathcal{F}_{t}$-semimartingale and satisfies the following It\^o's formulation
\begin{align*}
\int u(t,x) \varphi(x) dx & = \int u_{0}(x) \varphi(x) \ dx  -\int_{0}^{t} \int   b^{i}(x) \partial_i u(s,x)   \varphi(x)  \ dx\, ds \\[5pt]
& \quad+  \int_{0}^{t} \int  \partial_{i}\varphi(x) u(s,x) \ dx \ dB_{s}^{i}
\end{align*}
\begin{align*}
&\quad\quad\quad+\frac{1}{2}   \int_{0}^{t} \int  \Delta \,\varphi(x) u(s,x) \ dx\, ds  .
\end{align*}
\end{remark}

\section{Main Results}

\subsection{ Existence of weak solutions}

In this section, we will prove existence of $W^{1,p}$-weak solution under hypothesis \ref{hyp}. The key points of the proof is the ``regularization procedure'' and the inequalities \eqref{est2} and \eqref{est3}.

\begin{lemma}\label{renor}  Assume hypothesis (\ref{hyp}). Then there exists a $W^{1,p}$-weak solution $u$ of the Cauchy problem \eqref{trasport} and $u(t,x)=u_{0}(\phi_t^{-1}(x))$.
\end{lemma}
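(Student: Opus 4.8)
The plan is to realize the explicit candidate $u(t,x)=u_0(\psi_t(x))$, where $\psi=\phi^{-1}$ is the backward flow solving \eqref{itoassBac}, as a limit of classical solutions obtained by regularizing both the drift and the datum. Before passing to the limit I would first record that this candidate has the right regularity, namely $u\in L^p(\Omega\times[0,T],W^{1,p}(\R^d))$. Since $\phi_t$ is a $C^1$-diffeomorphism and $u_0\in W^{1,p}$, the chain rule for Sobolev functions gives $\nabla u(t,x)=(D\psi_t(x))^{T}(\nabla u_0)(\psi_t(x))$. The divergence-free condition \eqref{con2} forces $\det D\phi_t\equiv 1$, so the flow preserves Lebesgue measure and $\|u_0(\psi_t(\cdot))\|_{L^p}=\|u_0\|_{L^p}$. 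For the gradient I would use that $D\psi_t(x)=(D\phi_t(\psi_t(x)))^{-1}$ together with the cofactor bound $|D\psi_t(x)|\le C|D\phi_t(\psi_t(x))|^{d-1}$ (legitimate since the Jacobian determinant equals $1$), change variables $y=\psi_t(x)$, and invoke the uniform moment bound \eqref{est3} with exponent $p(d-1)$ to obtain $\E\int_0^T\!\int_{\R^d}|\nabla u|^p\,dx\,dt\le C\,\|\nabla u_0\|_{L^p}^p$.

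Next I would fix smooth approximations $b_n$ with $b_n-b\in C_b^{\theta}$ and $\|b_n-b\|_{C_b^{\theta}}\to0$, and smooth compactly supported $u_0^n\to u_0$ in $W^{1,p}$. For smooth data the flow $\phi^n$ is smooth and the stochastic characteristics method, via the generalized It\^o--Wentzell--Kunita formula (cf. \cite{Cat-Oli}), shows that $u^n(t,x):=u_0^n(\psi^n_t(x))$ is a classical solution of \eqref{trasport} with drift $b_n$; in particular each $u^n$ satisfies the It\^o weak formulation of Remark \ref{lemmaito} with $b_n$ in place of $b$. The analytic heart of the proof is then the convergence $u^n\to u$ and $\nabla u^n\to\nabla u$ in $L^p(\Omega\times[0,T]\times\R^d)$. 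Splitting $u^n-u=(u_0^n-u_0)\circ\psi^n+(u_0\circ\psi^n-u_0\circ\psi)$, measure preservation turns the first summand into $\|u_0^n-u_0\|_{L^p}\to0$, while the second is controlled by \eqref{est2} together with a density argument (approximate $u_0$ by smooth functions, where composition is continuous, and use the uniform bounds to estimate the error). The gradient convergence follows along the same lines from $\nabla u^n=(D\psi^n_t)^{T}(\nabla u_0^n)\circ\psi^n$, now combining \eqref{est1}, \eqref{est3} and measure preservation.

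It then remains to pass to the limit in the weak formulation satisfied by $u^n$. The terms $\int u^n(t)\varphi\,dx$ and $\int u_0^n\varphi\,dx$ converge by the $L^p$ convergence above and the compact support of $\varphi$; the drift term $\int_0^t\!\int b_n^i\,\partial_i u^n\,\varphi\,dx\,ds$ converges since $b_n\to b$ uniformly on $\mathrm{supp}\,\varphi$ and $\partial_i u^n\to\partial_i u$ in $L^1_{loc}$; the stochastic and It\^o-correction terms converge by the It\^o isometry using $u^n\to u$ in $L^2(\Omega\times[0,T]\times\mathrm{supp}\,\varphi)$. This produces \eqref{DISTINTSTR}, so $u$ is a $W^{1,p}$-weak solution with $u(t,x)=u_0(\phi_t^{-1}(x))$. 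I expect the main obstacle to be precisely the gradient convergence $\nabla u^n\to\nabla u$: it couples the Jacobian convergence \eqref{est1} with the composition of the merely $L^p$ function $\nabla u_0$ with the random maps $\psi^n$, so one cannot rely on pointwise continuity and must instead combine the measure-preserving change of variables, the uniform moment bounds \eqref{est3}, and an equi-integrability/density argument to pass the limit through the composition.
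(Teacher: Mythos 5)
Your proposal is correct in outline, but it takes a genuinely different, and substantially heavier, route than the paper. The paper never regularizes the drift in the existence proof: since $b\in C^{\theta}$ already generates a stochastic flow of $C^{1}$-diffeomorphisms by \cite{FGP}, its Step~1 works directly with that flow, citing \cite[Proposition 2.3]{Moli1} for the weak formulation when $u_0\in C_c^{\infty}(\R^d)$, and its Step~2 mollifies \emph{only the initial datum}, keeping the flow fixed. With a fixed flow, the convergence $u^{\varepsilon}\to u$ in $L^{p}(\Omega\times[0,T],W^{1,p}(\R^d))$ collapses to a single measure-preserving change of variables plus the moment bound \eqref{est3} applied through the cofactor identity $D\psi_t(\phi_t(x))=\mathrm{Cof}\,(D\phi_t(x))^{T}$: the error is bounded by $C\|Du_0^{\varepsilon}-Du_0\|_{L^p}^{p}$, and no stability of flows is needed; in the paper, \eqref{est1} and \eqref{est2} enter only in the uniqueness and stability sections. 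You instead regularize both $b$ and $u_0$, obtain the smooth case from the It\^o--Wentzell--Kunita formula, and must then prove $\nabla u^{n}\to\nabla u$ for \emph{varying} flows, which is exactly where you need \eqref{est1}, \eqref{est2} and the density/equi-integrability argument for composing the merely-$L^p$ function $\nabla u_0$ with the random maps $\psi^{n}$ --- the step you yourself flag as the main obstacle. That argument does go through (split off a fixed smooth, compactly supported approximation of $\nabla u_0$; handle the tails by measure preservation, H\"older's inequality and \eqref{est3}; handle the smooth piece by \eqref{est1}--\eqref{est2}, noting the relevant supports are transported with measure preserved), and it essentially reproduces the computation the paper carries out later in its drift-stability proposition; so your route buys a more self-contained smooth-case step and existence-plus-drift-stability in one pass, at the price of machinery the paper's existence proof avoids entirely.

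Two small repairs are needed. First, mollification of $b\in C^{\theta}$ converges only in $C_b^{\theta'}$ for $\theta'<\theta$ (not in $C_b^{\theta}$ unless $b$ lies in the little H\"older class), so you should invoke \eqref{est1}--\eqref{est3} with exponent $\theta'$; this is harmless since \cite{FGP} applies for every H\"older exponent in $(0,1)$. Second, for $1<p<2$ your It\^o-isometry step is not available as stated, since $L^{p}$ convergence of $u^{n}$ does not give $L^{2}(\Omega\times[0,T])$ convergence of the integrands $s\mapsto\int u^{n}(s,x)\,\partial_i\varphi(x)\,dx$; this is fixed by the pathwise bound $\big|\int u^{n}(s,x)\,\partial_i\varphi(x)\,dx\big|\le\|u_0^{n}\|_{L^p}\|\partial_i\varphi\|_{L^{p'}}$ (measure preservation again), which provides the domination needed to upgrade convergence in measure to convergence of the stochastic integrals.
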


\begin{proof}
We divide the proof in two steps.

\medskip

{\it Step 1 : Assume $u_0 \in C_c^{\infty}(\R^d)$}. From a minor modification of the arguments in Mollinedo and Olivera \cite[Proposition 2.3]{Moli1} it follows that $u(t,x)=u_{0}(\phi_t^{-1}(x))$ satisfies
\begin{align}\label{eq-t1-regul renormalizada com beta-itobis}
\int_{\R^d} u(t,x) \varphi(x) dx    = &\int_{\R^d} u_0(x)\varphi(x) \ dx \nonumber\\[5pt]
                                                   & -\int_{0}^{t} \int_{\R^d} \partial_i u(s,x) \  b^{i}(x) \ \varphi(x) \ dx \,ds \nonumber\\[5pt]
                                                                                  	& + \int_{0}^{t} \int_{\R^d} u(s,x) \ \partial_{i} \varphi(x) \ dx \,{dB^i_s} \nonumber\\[5pt]
                                                   & + \frac{1}{2} \int_{0}^{t} \int_{\R^d} u(s,x) \Delta\varphi(x)\,dx\, ds\,.
\end{align}
As $\dive \ b=0$, that is, the Jacobian of the stochastic flow is identically one, we observe that making the change of variables $y=\psi_t(x)=\phi_t^{-1}(x)$ we obtain
\begin{align*}
\int_0^T \int_{\Omega}\int |  u_0 (\psi_t(x)) |^{p} \ dx \mathbb{P}(d\omega) dt & =T \int |  u_0 (y) |^{p} dy  \\[5pt]
&\leq T \|u_0\|^p_{W^{1,p}(\R^d)}
\end{align*}
and
\begin{align}\label{eq1}
\int_0^T \int_{\Omega}\int_{\R^d} & | D u_0 (\psi_t(x)) |^{p} \  dx \mathbb{P}(d\omega) dt \nonumber\\[5pt]
&=\int_0^T \int_{\Omega}\int_{\R^d} | D u_0 (\psi_t(x)) |^{p} |D\psi_t(x)|^p \ dx \mathbb{P}(d\omega) dt \nonumber\\[5pt]
&\leq \,\,\int_0^T \int_{\R^d} | D u_0 (y) |^{p} \,\mathbb{E}[|D\psi_t(\phi_t(y),\omega)|^p ]\ dy (d\omega) dt\,.
\end{align}
Now, we see that
\begin{align}\label{eq-flow-flow}
D(\psi_t (\phi_{t}(x),\omega))= D^{-1}(\phi_{t}(x))
\end{align}
and
\begin{align}\label{eq-cofator}
D^{-1}(\phi_{t}(x))= Cof (D \phi_{t}(x))^{T}\,,
\end{align}
where $Cof$ denotes the cofactor matrix of $D \phi_{t}$. By inequality \eqref{est3} we get that $Cof (D \phi_{t}(x))^{T}\in L^{\infty}(\R^d; L^{p}(\Omega; L^{\infty}([0,T])))$. Thus, considering \eqref{eq1} we have
\begin{align*}
\int_0^T \int_{\Omega}\int_{\R^d} & | D u_0 (\psi_t(x)) |^{p} \ dx \mathbb{P}(d\omega) dt \\[5pt]
&= \,\,\int_0^T \int_{\R^d} | D u_0 (y) |^{p} \,\mathbb{E}[|D\psi_t(\phi_t(y),\omega)|^p ]\ dy \ dt\,\\[5pt]
& \leq C \int_{\R^d} | D u_0 (y) |^{p} dy\,\\[5pt]
& \leq C \|u_0\|^p_{W^{1,p}(\R^d)}
\end{align*}
Therefore, we conclude  that $u(t,x)=u_0 (\phi^{-1}_t(x))$ is a $W^{1,p}$-weak solution of the equation (\ref{trasport}).

\medskip

{\it Step 2: Assume $u_0 \in  W^{1,p}(\R^d)$}. Let $\{\rho_\varepsilon\}_\varepsilon$ be a family of standard symmetric mollifiers and $\eta$ a non-negative smooth cut-off function supported on the ball of radius 2 and such that $\eta=1$. So, for every $\varepsilon>0$, we introduce te rescaled functions $\eta_{\varepsilon} (\cdot) =  \eta(\varepsilon \cdot)$. In this way, we define the family of regularized initial conditions given by
$$u_0^\varepsilon (x) = \eta_\varepsilon(x) \big( [ u_0(\cdot) \ast \rho_\varepsilon (\cdot) ] (x)  \big)\,.$$
From step 1 it follows that $u^\varepsilon(t,x) =u_0^\varepsilon(\phi^{-1}_t(x))$ verifies (Itô's form)
\begin{align}\label{eqirr}
\int_{\R^d} u^\epsilon(t,x) \varphi(x) dx    = &\int_{\R^d} u_0^\varepsilon(x)\varphi(x) \ dx \nonumber\\[5pt]
                                                   & -\int_{0}^{t} \int_{\R^d} \partial_i u^\epsilon(s,x) \ b(x) \ \varphi(x) \ dx \,ds \nonumber\\[5pt]
                                               & + \int_{0}^{t} \int_{\R^d} u^\epsilon(s,x) \ \partial_{i} \varphi(x) \ dx \,{dB^i_s} \nonumber\\[5pt]
                                                   & + \frac{1}{2} \int_{0}^{t} \int_{\R^d} u^\epsilon(s,x) \Delta\varphi(x)\,dx\, ds\,.
\end{align}
Now, we can see that $u^\epsilon(s,x)=u_0^{\varepsilon}(\phi^{-1}_t(x))$ converges strongly to $u(s,x)=u_0(\phi^{-1}_t(x))$ in  $L^p(\Omega\times[0,T],  W^{1,p}(\R^{d}))$. In fact, doing the change of variables $y=\psi_t(x)=\phi^{-1}_t(x)$  we have
\begin{align*}
\int_{\Omega}\int_{0}^{T} \int_{\R^d} |u_0^{\varepsilon}(\psi_t(x)) -  u_0(\psi_t(x))|^{p}\ dx \ ds \mathbb{P}(d\omega) & = \int_{0}^{T} \int_{\R^d}   |u_0^\epsilon(y) -  u_0(y)|^{p} \ dy \ ds\,,
\end{align*}
 and using \eqref{eq-cofator}, \eqref{eq-flow-flow}  and \eqref{est3} it results
\begin{align*}
\int_{\Omega}\int_{0}^{T} \int_{\R^d}  & | D u_0^{\varepsilon}(\psi_t(x))- Du_0(\psi_t(x))|^p \ dx \mathbb{P}(d\omega) dt \\[5pt]
&\leq \int_{\Omega}\int_{0}^{T} \int_{\R^d}  | D u_0^{\varepsilon}(\psi_t(x))- Du_0(\psi_t(x))|^p |D\psi_t(x)|^p  \ dx \mathbb{P}(d\omega) dt \\[5pt]
& \leq \int_0^T \int_{\R^d} | D u_0^{\varepsilon}(y)- Du_0(y)|^p \ \mathbb{E}[|D\psi_t(\phi_t(y),\omega)|^p ] \ dy  dt \\[5pt]
& \leq C \int_{\R^d} | D u_0^{\varepsilon}(y)- Du_0(y)|^p dy
\end{align*}
Therefore, by the calculus made above, we can pass to the limit in (\ref{eqirr}) as $\varepsilon \to 0$.  Thus, we conclude the that $u(t,x)=u_0 (\phi^{-1}_t(x))$ is a $W^{1,p}$-weak solution of SPDE (\ref{trasport}).
%
%
%

%
\end{proof}

\subsection{Uniqueness of weak solutions}\label{UNIQUE}

In the present section, we shall show the uniqueness result for $W^{1,p}-$weak solutions for the SPDE \eqref{trasport}. As mentioned in the introduction, using the divergence-free condition, in the demonstration below we avoid to consider ideas on ``commutators''.
\medskip

\begin{theorem}\label{uni}
Under the conditions of hypothesis (\ref{hyp}) uniqueness holds for $W^{1,p}-$weak solutions of the Cauchy problem \eqref{trasport} in the following sense: If $u_1,u_2 \in L^p([0,T] \times \Omega, W^{1,p}(\mathbb{R}^{d}))$ are two $W^{1,p}-$weak solutions with the same initial data $u_{0}\in W^{1,p}(\mathbb{R}^{d})$, then  $u_1= u_2$ almost everywhere in $[0,T] \times\R^d \times \Omega$.
\end{theorem}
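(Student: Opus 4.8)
The plan is to exploit linearity together with the transport/duality structure of the equation, realizing the ``composition with the inverse flow'' idea announced in the introduction so that the only merely H\"older-regular object, the drift $b$, is never differentiated. By linearity it suffices to set $u:=u_1-u_2$, which is a $W^{1,p}$-weak solution of \eqref{trasport} with \emph{zero} initial datum, and to prove that $u=0$ almost everywhere in $[0,T]\times\R^d\times\Omega$. Fix a test function $\theta\in C_c^\infty(\R^d)$ and, writing $\psi_t=\phi_t^{-1}$ for the inverse flow, set $w(t,x):=\theta(\psi_t(x))$. Applying Lemma~\ref{renor} with the conjugate exponent $p'=p/(p-1)$ (and smooth, compactly supported datum $\theta$), $w$ is itself a weak solution of \eqref{trasport}, hence both $u$ and $w$ satisfy the same It\^o formulation of Remark~\ref{lemmaito}. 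The core of the argument is to show that the pairing $t\mapsto\int_{\R^d}u(t,x)\,w(t,x)\,dx$ is constant in time almost surely; since its value at $t=0$ is $\int u_0\,\theta\,dx=0$, conservation forces $\int_{\R^d}u(t,x)\,\theta(\psi_t(x))\,dx=0$ for every $t$.

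To differentiate the pairing I would combine the weak It\^o equation for $u$ (with $w(s,\cdot)$ as instantaneous test function) with the evolution of $w$, that is, apply a generalized It\^o--Wentzell--Kunita product formula (in the spirit of \cite[Theorem 8.3]{Ku2}). Because $u$ is only $W^{1,p}$ and $w$ inherits the regularity of the flow, this must be carried out on regularizations $u^\varepsilon=u\ast\rho_\varepsilon$ and on a smooth approximation of $w$, then passed to the limit; the convergences are controlled by the strong $W^{1,p}$-convergence $u^\varepsilon\to u$, by the flow bounds \eqref{est3}, and by the change of variables $y=\psi_t(x)$ of unit Jacobian, exactly as in the existence proof of Lemma~\ref{renor}. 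In the resulting identity three groups of terms vanish for structural reasons. The first-order (drift) terms assemble into $\int_{\R^d}b\cdot\nabla(uw)\,dx$, which is zero since $uw\in W^{1,1}$ has compact support and $\dive b=0$ holds distributionally; this is precisely where the divergence-free hypothesis \eqref{con2} replaces a commutator estimate, the derivative falling on $uw$ and never on $b$. The Brownian terms assemble into $\int_{\R^d}\nabla(uw)\,dx=0$, the integral of a compactly supported total derivative. Finally the second-order It\^o-correction terms, after a single integration by parts leaving \emph{one} derivative on $u$ and one on $w$, reduce to $\pm\int_{\R^d}\nabla u\cdot\nabla w\,dx$ and cancel against the quadratic-covariation contribution $\int_{\R^d}\nabla u\cdot\nabla w\,dx$.

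Having established $\int_{\R^d}u(t,x)\,\theta(\psi_t(x))\,dx=0$ for all $\theta\in C_c^\infty(\R^d)$ and all $t$, almost surely, I would conclude by the change of variables $y=\psi_t(x)$, whose Jacobian equals $1$ by \eqref{con2}. This yields $\int_{\R^d}u(t,\phi_t(y))\,\theta(y)\,dy=0$ for every test function, hence $u(t,\phi_t(\cdot))=0$ and therefore $u(t,\cdot)=0$ for a.e.\ $x$, for each $t$, almost surely; using the continuity in $t$ of the semimartingale pairing together with Fubini one upgrades this to $u=0$ a.e.\ in $[0,T]\times\R^d\times\Omega$, i.e.\ $u_1=u_2$.

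The step I expect to be the main obstacle is the rigorous justification of the product formula at the low regularity available: $u\in W^{1,p}$ forbids writing $\Delta u$, and $w$ is a priori only $C^1$ in $x$, so the second-order terms (formally carrying a Laplacian) must be reorganized into first-order quantities \emph{before} regularizing and passing to the limit, and one must verify that the regularization leaves no surviving commutator. The device that makes this work is exactly the one emphasized in \cite{Moli1}: every spatial derivative is steered either onto $u$ (at most one, controlled in $W^{1,p}$) or onto the regular flow-composed test function $w$, while the divergence-free condition disposes of the drift as a distributional pairing, so that no derivative of $b$ is ever required.
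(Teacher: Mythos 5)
Your strategy is in essence the paper's own: compose a test function with the inverse flow, apply the It\^o product formula to $u_\varepsilon(t,x)\,\varphi(Y_t^{\delta}(x))$, and let the divergence-free condition cancel the drift contribution by steering every derivative onto $u\,\varphi(Y)$ rather than onto $b$ --- in the paper the term $\partial_{x_i}(u\,b^i)_\varepsilon$ in \eqref{eq-N1-3} is never split, so no commutator appears, and the endpoint is exactly your conserved pairing, namely the identity $\int u(t,X_t(x))\varphi(x)\,dx=0$ of \eqref{UNIQ30}, from which $u\equiv 0$ follows by the diffeomorphism property. Your formal cancellations (martingale terms summing to $\int \nabla(uw)\,dx=0$, It\^o corrections cancelling the quadratic covariation) are precisely what survives the paper's bookkeeping in \eqref{eq-N5-1}. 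One of your anticipated obstacles is also resolved the way you guess: the ``smooth approximation of $w$'' is implemented by mollifying the drift, so that $v^{\delta}=\varphi(Y_t^{\delta})$ solves \eqref{trasportC} classically by Kunita's formula, and $\delta\to 0$ is taken afterwards via \eqref{est2}; invoking Lemma \ref{renor} alone only yields $w$ as a weak solution, which is not enough to run a pointwise It\^o product rule.

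The genuine gap is your treatment of the unboundedness of $b$. You dispose of the dangerous terms by noting that $uw$ has compact support, but the support of $w(t,\cdot)=\theta(\psi_t(\cdot))$ is $\phi_t(\mathrm{supp}\,\theta)$: compact for each $\omega$, yet not uniformly bounded in $\omega$. This pathwise observation does not justify the limit passages: to remove the mollifications in $\varepsilon$ and $\delta$ inside the stochastic integrals and the $ds\,d\mathbb{P}$-integrals one needs domination uniform in $\omega$, and with $b$ merely in $C^{\theta}(\R^d,\R^d)$ --- hence of linear growth under the norm adopted in Section 2 --- dominated convergence fails over random unbounded regions. This is exactly the obstruction the paper flags at the opening of its Step 2 (``the Dominated Convergence Theorem does not work''). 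The paper's fix, absent from your outline, is the deterministic cutoff $\eta_R$: all manipulations are performed on the fixed ball of radius $2R$ (equations \eqref{eq-N1-3}--\eqref{eq-N4-1}), at the price of boundary terms in $\nabla\eta_R$, $\Delta\eta_R$ and $b\cdot\nabla\eta_R$ that do \emph{not} vanish identically; these are then killed as $R\to\infty$ using $|\nabla\eta_R|\le C/R$, $|\Delta\eta_R|\le C/R^2$, the bound $|b(x)|\le \|b\|_{C^{\theta}}(1+|x|)$ and $u\in L^p$ (the three estimates closing Step 3). Note also that the term in $b\cdot\nabla\eta_R$ survives with a factor $(1+2R)/R$, so its vanishing genuinely uses $u\in L^p$ on the annulus $R<|x|<2R$, not just the cutoff scaling. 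Without this localization layer, your plan as written stalls at precisely the point you deferred to ``controlled convergences''; with it, your argument becomes the paper's proof.
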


\begin{proof}
We divide the proof in three steps. Before starting the proof we see that, by linearity, is sufficient to prove that a $W^{1,p}$-weak solution with initial condition $u_0=0$ vanishes identically. Let us denote  by $u$ such a solution.

{\it Step 1: Regularization.} For $\varepsilon>0$ and $\delta>0$, let us consider $\phi_{\varepsilon}, \phi_{\delta}$ the standard symmetric mollifiers. So, by considering $u_{\varepsilon}(t,\cdot)=u(t,\cdot)\ast \phi_{\varepsilon}$ we get the integral equation
\begin{equation} \label{DISTINTSTRAPP}
\begin{aligned}
     u_\varepsilon(t,x)  = & -\int_{0}^{t} \!\! \int_{\R^d} \partial_i u(s,z) \ b^i(z)  \phi_\varepsilon(x-z) \ dz ds
\\[5pt]
    &  + \int_{0}^{t} \!\! \int_{\R^d} u(s,z) \ \partial_{i} \phi_\varepsilon(x-z) \ dz \, {\circ}{dB^i_s}\,,
\end{aligned}
\end{equation}
which, for each $\ve> 0$, is strong  in the analytic sense.

\medskip

Now, let us denote by $b^{\delta}$ and $X_t^{\delta }$  the standard mollification of $b$ and the associated flow to the SDE (\ref{itoass}) (with $b^{\delta}$ instead $b$), respectively. Analogously, let $Y^\delta_{t}$ be the associated flow to the SDE \eqref{itoassBac}. In this way, by a change of variables $x=X_t^{\delta}(y)$ and for each $\varphi \in C^{\infty}_c(\R^d)$ we see
\begin{equation}
\label{PUSHFORWARD}
    \int_{\R^d}  u_{\varepsilon}(t, X_t^{\delta}(y)) \; \varphi(y) \ dy
    =\int_{\R^d}  u_{\varepsilon}(t,x) \;   \varphi(Y_t^{\delta}(x)) \ dx \,,
\end{equation}
for each $t \in [0,T]$ (recall that $\dive \ b^{\delta} =0$, that is, the Jacobian of the stochastic flow is identically one). We also see that by applying  It\^{o}'s formula (see \cite{Ku3}) to the process $v^{\delta}(t,x)= \varphi(Y_t^{\delta})$ it satisfies the stochastic transport equation in the classical sense, that is,
\begin{equation}\label{trasportC}
 \left \{
\begin{aligned}
    & dv^{\delta}(t, x) + b^{\delta}(x) \nabla v^{\delta}(t, x)   \ dt + \nabla v^{\delta}(t, x)  \circ  d B_{t}= 0,
    \\[5pt]
    & v^{\delta}|_{t=0}=  \varphi(x).
\end{aligned}
\right .
\end{equation}
As $u_\ve$ is strong in the analytic sense, then, by applying again It\^o's formula to the product of semimartingales $$u_{\varepsilon}(t,x)  \varphi(Y_t^{\delta}),$$
we have
\begin{align}\label{eq-regular}
u_{\varepsilon}(t,x) \;  \varphi(Y_t^{\delta}) =& -\int_{0}^{t}\!\!  u_\varepsilon(s,x)  \; b^{\delta}(x) \nabla \varphi(Y_t^{\delta})   \  ds - \int_{0}^{t}  u_\varepsilon(s,x) \;   \partial_{x_i} [\varphi(Y_t^{\delta})] \  \circ dB_{s}^{i} \nonumber\\[5pt]
& -\int_{0}^{t} \!\! \int_{\R^d} \varphi(Y_t^{\delta})    \int_{\R^d}   \partial_{y_i} u(s,y) \; b^{i}(y) \  \phi_{\varepsilon}(x-y)  \ dy \ ds \nonumber\\[5pt]
&+  \int_{0}^{t} \!\! \int_{\R^d}  \varphi(Y_t^{\delta})  u(s,y) \; \partial_{y_i} \phi_{\varepsilon}(x-y)  \ dy \  \circ dB_{s}^{i}\,.
\end{align}

{\it Step 2: Localization and Passing to the limit as $\ve \to 0$ and $\delta\to 0$.} The idea in this step is to pass to the limit in the above equation \eqref{eq-regular}, but under condition \eqref{con1} it is not possible because the Dominated Convergence Theorem does not work. Thus, in order to avoid that problem initially we need to do a ``Localization''.

\medskip

In fact, multiplying the equation \eqref{eq-regular} by a differentiable function $\eta_R$ defined by $\eta_R(x)=\eta(x/R)$ ($R>0$), where $\eta\in C_c^{\infty}(\R^d)$ is a cut-off function such that $0\leq \eta\leq 1$, $\eta\equiv 1$ in the ball of radius one and with support in the ball of radius 2; and integrating on $\R^d$ we have
\begin{align*}
\int_{\R^d}  u_{\varepsilon}(t,x) & \;  \varphi(Y_t^{\delta}) \ \eta_R(x)\ dx =-\int_{0}^{t}\!\!  \int_{\R^d}  u_\varepsilon(s,x)  \; b^{\delta}(x) \cdot \nabla \varphi(Y_t^{\delta}) \eta_R(x)  \ dx  \  ds \\[5pt]
& - \int_{0}^{t}  \!\! \int_{\R^d}  u_\varepsilon(s,x) \;   \partial_{x_i} [\varphi(Y_t^{\delta})]  \eta_R(x) dx \  \circ dB_{s}^{i} \\[5pt]
& -\int_{0}^{t} \!\! \int_{\R^d} \varphi(Y_t^{\delta}) \ \eta_R(x)   \int_{\R^d}   \partial_{y_i} u(s,y) \; b^{i}(y) \  \phi_{\varepsilon}(x-y) \ dy \ dx \ ds \\[5pt]
& + \int_{0}^{t} \!\! \int_{\R^d}  \varphi(Y_t^{\delta}) \ \eta_R(x) \int_{\R^d} u(s,y) \; \partial_{y_i} \phi_{\varepsilon}(x-y) \ dy \ dx \  \circ dB_{s}^{i}.
\end{align*}
Integrating by parts the last term on right hand side of the above equation and using that $\partial_{y_i} \phi_{\varepsilon}(x-y)=-\partial_{x_i} \phi_{\varepsilon}(x-y)$ it results
\begin{align*}
\int_{\R^d}  u_{\varepsilon}(t,x) & \;  \varphi(Y_t^{\delta}) \ \eta_R(x)\ dx \\[5pt]
& =-\int_{0}^{t}\!\!  \int_{\R^d}  u_\varepsilon(s,x)  \; b^{\delta}(x) \cdot \nabla \varphi(Y_t^{\delta}) \eta_R(x)  \ dx  \  ds \\[5pt]
& \quad+ \int_{0}^{t}  \!\! \int_{\R^d}  u_\varepsilon(s,x) \; \varphi(Y_t^{\delta}) \ \partial_{x_i} \eta_R(x) dx \  \circ dB_{s}^{i} \\[5pt]
& \quad-\int_{0}^{t} \!\! \int_{\R^d} \varphi(Y_t^{\delta})  \eta_R(x)   \int_{\R^d}   \partial_{y_i} u(s,y) \; b^{i}(y) \  \phi_{\varepsilon}(x-y) \ dy \ dx \ ds\,.
\end{align*}
Passing to Itô's formulation, we get
\begin{align}\label{eq-N1-3}
\int_{\R^d}  u_{\varepsilon}(t,x) & \;  \varphi(Y_t^{\delta}) \ \eta_R(x)\ dx =-\int_{0}^{t}\!\!  \int_{\R^d}  u_\varepsilon(s,x)  \; b^{\delta}(x) \cdot \nabla \varphi(Y_t^{\delta}) \ \eta_R(x)  \ dx  \  ds \nonumber\\[5pt]
& + \int_{0}^{t}  \!\! \int_{\R^d}  u_\varepsilon(s,x) \; \varphi(Y_t^{\delta}) \  \partial_{x_i} \eta_R(x) \ dx \   dB_{s}^{i} \nonumber\\[5pt]
&+\frac{1}{2}  \int_{0}^{t}  \!\! \int_{\R^d} u_{\varepsilon}(s,x) \; \varphi(Y_t^{\delta}) \ \Delta\eta_R(x) \ dx \ ds \nonumber\\[5pt]
& -\int_{0}^{t} \!\! \int_{\R^d} \varphi(Y_t^{\delta}) \ \eta_R(x)   \  \partial_{x_i} (u(s,\cdot) \; b(\cdot))_{\varepsilon}(x)  \ dx  \ ds.
\end{align}
Now, for $\delta> 0$ fixed, by the Dominated Convergence Theorem we take the limit as $\varepsilon$ goes to $0^+$ in the above equation to obtain
\begin{align}\label{eq-N3-1}
\int_{\R^d}  u(t,x) & \;  \varphi(Y_t^{\delta}) \ \eta_R(x)\ dx =-\int_{0}^{t}\!\!  \int_{\R^d}  u(s,x)  \; b^{\delta}(x) \cdot \nabla \varphi(Y_t^{\delta}) \eta_R(x)  \ dx  \  ds \nonumber\\[5pt]
& + \int_{0}^{t}  \!\! \int_{\R^d}  u(s,x) \; \varphi(Y_t^{\delta}) \ \partial_{x_i} \eta_R(x) dx \  dB_{s}^{i} \nonumber\\[5pt]
&+\frac{1}{2}  \int_{0}^{t}  \!\! \int_{\R^d} u(s,x) \;  \varphi(Y_t^{\delta}) \ \Delta\eta_R(x) \ dx \ ds \nonumber\\[5pt]
& -\int_{0}^{t} \!\! \int_{\R^d} \varphi(Y_t^{\delta}) \ \eta_R(x) \ \partial_{x_i} (u(s,x) \; b(x))  \ dx \ ds\,.
\end{align}
Again, by using the Dominated Convergence Theorem and the property (\ref{est2}), we can pass to the limit in \eqref{eq-N3-1} as $\delta$ goes to $0^+$, on the space $L^2(\Omega\times[0,T])$, to conclude that
\begin{align}\label{eq-N4-1}
\int_{\R^d}  u(t,x) & \;  \varphi (Y_t(x)) \ \eta_R(x)\ dx =-\int_{0}^{t}\!\!  \int_{\R^d}  u(s,x)  \; b(x) \cdot \nabla \varphi (Y_t(x)) \ \eta_R(x)  \ dx  \  ds \nonumber\\[5pt]
& + \int_{0}^{t}  \!\! \int_{\R^d}  u(s,x) \; \varphi (Y_t(x)) \  \partial_{x_i} \eta_R(x) dx \   dB_{s}^{i} \nonumber\\[5pt]
&+\frac{1}{2}  \int_{0}^{t}  \!\! \int_{\R^d} u(s,x) \;  \varphi (Y_t(x)) \ \Delta\eta_R(x) \ dx \ ds \nonumber\\[5pt]
& -\int_{0}^{t} \!\! \int_{\R^d} \varphi (Y_t(x)) \  \eta_R(x)  \partial_{x_i} (u(s,x) \; b(x))  \ dx \ ds\,.
\end{align}

\medskip

{\it Step 3: Passing to the limit as $R\to +\infty$ and Conclusion.} We are going to prove that, given $\varphi \in C_c^{\infty}(\R^d)$, the $\mathbb{P}$-a.s. limit
$$\int_{\R^d} u(t, X_t(x)) \varphi(x) dx =\lim_{R\to +\infty} \int_{\R^d}  u(t,x)  \; \varphi (Y_t(x)) \ \eta_R(x)\ dx $$
is zero. Hence, from the fact that $X_t$ is a bijection we have that $u$ is identically zero.

\medskip

We start integrating by parts the last term on the right hand side of the equation \eqref{eq-N4-1}:
\begin{align}\label{eq-N5-1}
\int_{\R^d}  u(t,x) & \; \varphi (Y_t(x)) \ \eta_R(x)\ dx = \int_{0}^{t}  \!\! \int_{\R^d}  u(s,x) \; \varphi (Y_t(x))  \partial_{x_i} \eta_R(x) dx \   dB_{s}^{i} \nonumber\\[5pt]
&+\frac{1}{2}  \int_{0}^{t}  \!\! \int_{\R^d} u(s,x) \;  \varphi (Y_t(x)) \ \Delta\eta_R(x) \ dx \ ds \nonumber\\[5pt]
& +\int_{0}^{t} \!\! \int_{\R^d} u(s,x) \ \varphi (Y_t(x))  \; b^i(x) \partial_{x_i} \ \eta_R(x) \ dx \ ds\,.
\end{align}
Now, by considering the estimates (to $R\geq 1$)
\begin{align*}
|\nabla \eta_R(x)|\leq\frac{\|\nabla\eta\|_{L^\infty(\R^d)}}{R} \textbf{I}_{B(R,2R)}(x)
\end{align*}
and
\begin{align*}
|\Delta \eta_R(x)|\leq\frac{\|\Delta\eta\|_{L^\infty(\R^d)}}{R^2} \textbf{I}_{B(R,2R)}(x)
\end{align*}
where $\textbf{I}_A$ represents the indicator function of $A$ and $B(R,2R)=\{x\in \R^d: R<|x|<2R\}$, we see that, doing $R$ goes to $+\infty$, all terms on the right hand side of the equation \eqref{eq-N5-1} converge to zero. In fact, as $\varphi\in C_c^{\infty}(\R^d)$, it is enough to see the following estimates:
\begin{align*}
\int_0^T\int_{\Omega} & \bigg|\int_{0}^{t}  \!\! \int_{\R^d}  u(s,x) \; \varphi (Y_t(x)) \  \partial_{x_i} \eta_R(x) \ dx \   dB_{s}^{i}\bigg|^2 \\[5pt]
& \leq \int_0^T\int_{\Omega} \int_{\R^d}|u(s,x)|^2 \ |\varphi (Y_t(x))|^2 \ |\partial_{x_i} \eta_R(x)|^2 \ dx \ \mathbb{P}(d\omega) ds \\[5pt]
& \leq \frac{\|\nabla\eta\|^2_{L^\infty(\R^d)}}{R^2} \int_0^T\int_{\Omega} \int_{\R^d} |u(s,X_t(y))|^2 \ |\varphi(y)|^2 \textbf{I}_{B(R,2R)}(X_t(y))\ dy \mathbb{P}(d\omega) ds\,.
\end{align*}
Also, if \ $\displaystyle\frac{1}{p}+\frac{1}{q}=1$
\begin{align*}
\bigg|\int_{0}^{t}   &  \int_{\R^d} u(s,x) \;  \varphi (Y_t(x))  \ \Delta\eta_R(x) \ dx \ ds\bigg| \\[5pt]
& \leq \bigg(\int_0^t \int_{\R^d} |u(s,x)|^p |\Delta\eta_R(x)|^p dx \ ds\bigg)^{1/p} \ \bigg(\int_0^t \int_{\R^d} |\varphi (Y_t(x)) |^q  dx \ ds\bigg)^{1/q}  \\[5pt]
& \leq \frac{C}{R^2}\bigg(\int_0^t \int_{\R^d} |u(s,x)|^p \ \textbf{I}_{B(R,2R)}(x)  \ dx \ ds\bigg)^{1/p}\,.
\end{align*}
Likewise, as above
\begin{align*}
\bigg|\int_{0}^{t} \!\! & \int_{\R^d} u(s,x) \ \varphi (Y_t(x))   \; b^i(x) \ \partial_{x_i}\eta_R(x) \ dx \ ds\bigg| \\[5pt]
& \leq \int_{0}^{t} \!\!  \int_{\R^d} |u(s,x)| \ |\varphi (Y_t(x)) |  \; |b^i(x)| \ |\partial_{x_i} \eta_R(x)| \ dx \ ds\\[5pt]
& \leq  \frac{\|\nabla\eta\|_{L^\infty(\R^d)}}{R}(1+2R) \int_{0}^{t} \!\!  \int_{\R^d} |u(s,x)| \ |\varphi (Y_t(x)))| \ \Big(\frac{|b^i(x)|}{1+|x|} \Big)\ \textbf{I}_{B(R,2R)}(x) \ dx \ ds \\[5pt]
& \leq C \|b\|_{C^{\theta}} \bigg(\int_0^t \int_{\R^d} |u(s,x)|^p \ \textbf{I}_{B(R,2R)}(x)  \ dx \ ds\bigg)^{1/p}\,.
\end{align*}
Therefore, passing to the limit in the equation \eqref{eq-N5-1} as $R \to +\infty$ we find out that
\begin{equation}
\label{UNIQ30}
\int_{\R^d} u(t, X_t(x)) \varphi(x) dx =0
\end{equation}
for each $\varphi \in C_c^\infty(\R^d)$, and $t \in [0,T]$.

\bigskip
To conclude, let $K$ be any compact set in $\R^d$. Then, from the above equation we obtain
$$
  \begin{aligned}
    \!\! \int_{K}   \mathbb{E}| u(t,x) |^{p}  \ dx &=   \!\! \int_{K} \mathbb{E}| u(t, X_t(Y_t(x)) ) |^{p}  \ dx
\\[5pt]
     &=   \;  \mathbb{E}   \!\!  \int_{Y_t(K)}    |u(t, X_t(y)) |^{p}  \ dy  = 0\,,
\\[5pt]
\end{aligned}
$$
where it has been used \eqref{UNIQ30} and  that $X_t$ is a stochastic flows of diffeomorphism. Consequently, the conclusion of our  theorem is proved.
\end{proof}

\bigskip

\subsection{ Stability}

In this last section, we prove the strong stability property for the solutions of the stochastic transport equation \eqref{trasport}. Such property  will be shown first with respect to the initial condition $u_0$ and after respect to the vector field $b$.

\begin{proposition}\label{strong}
Assume the conditions \eqref{con1} and \eqref{con2}. Then, for any sequence $\{u_0^n\} \subset  W^{1,p}(\R^d)$ strong converging to $u_0 \in W^{1,p}(\R^d)$ we have:
\begin{align*}
u^n(t,x)\quad\text{converges to}\quad u(t,x)\quad\text{in}\quad L^p([0,T] \times \Omega, W^{1,p}(\mathbb{R}^{d}))
\end{align*}
as $n \to \infty$, where $u^n(t,x)$ and $u(t,x)$ are the unique $W^{1,p}-$weak solutions of the Cauchy problem \eqref{trasport} with the initial data $u_0^{n}$ and $u_0$, respectively.
\end{proposition}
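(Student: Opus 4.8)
The plan is to exploit the explicit representation of the solutions furnished by Lemma \ref{renor} together with the uniqueness guaranteed by Theorem \ref{uni}. Since the drift $b$ is fixed in all of the equations under consideration, the associated stochastic flow $\phi_t$, and its inverse $\psi_t=\phi_t^{-1}$, is \emph{common} to every initial datum. Hence, by Lemma \ref{renor} and the uniqueness of Theorem \ref{uni}, the solutions are given explicitly by $u^n(t,x)=u_0^n(\psi_t(x))$ and $u(t,x)=u_0(\psi_t(x))$, so that the difference is simply $u^n(t,x)-u(t,x)=(u_0^n-u_0)(\psi_t(x))$: the same initial error transported by a single fixed flow. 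The whole proof then reduces to repeating the estimates of Step 2 in the proof of Lemma \ref{renor}, now applied to the difference $u_0^n-u_0$ in place of $u_0^\varepsilon-u_0$.

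First I would control the $L^p(\Omega\times[0,T]\times\R^d)$ norm of $u^n-u$. Performing the change of variables $y=\psi_t(x)$ and using the divergence-free condition \eqref{con2}, which forces the Jacobian of the flow to be identically one, one obtains
\begin{align*}
\int_\Omega\!\int_0^T\!\int_{\R^d} |u^n(t,x)-u(t,x)|^p \ dx\,dt\,\mathbb{P}(d\omega)
= T\,\|u_0^n-u_0\|_{L^p(\R^d)}^p,
\end{align*}
which vanishes as $n\to\infty$ by the hypothesis $u_0^n\to u_0$ in $W^{1,p}(\R^d)$.

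Next, for the gradient part, the chain rule gives $D(u^n-u)(t,x)=D(u_0^n-u_0)(\psi_t(x))\,D\psi_t(x)$. Estimating the $L^p$ norm, applying the change of variables $x=\phi_t(y)$ and invoking the identities \eqref{eq-flow-flow} and \eqref{eq-cofator}, I would bound the resulting quantity by $\int_0^T\!\int_{\R^d}|D(u_0^n-u_0)(y)|^p\,\mathbb{E}\big[|D\psi_t(\phi_t(y),\omega)|^p\big]\,dy\,dt$. The uniform bound \eqref{est3} on the spatial derivative of the flow then yields the estimate $C\,\|D(u_0^n-u_0)\|_{L^p(\R^d)}^p$, which again tends to zero. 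Combining the two estimates gives $\|u^n-u\|_{L^p([0,T]\times\Omega,\,W^{1,p}(\R^d))}\to 0$, as desired.

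There is no genuine analytic obstacle here, since no passage to the limit in the singular drift term is involved and no commutators appear; the computation is essentially identical to the one already carried out in Lemma \ref{renor}. The only point that deserves emphasis is that the flow $\phi_t$, and consequently the constant $C$ arising from \eqref{est3}, is \emph{independent of $n$} precisely because the vector field $b$ is fixed throughout. Thus stability with respect to the initial condition reduces to the $W^{1,p}$-convergence of the initial data composed with one and the same fixed flow whose first spatial derivatives are uniformly $p$-integrable in $\omega$, which is exactly what \eqref{est3} provides.
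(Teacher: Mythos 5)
Your proposal is correct and follows essentially the same route as the paper's own proof: both use the representation $u^n=u_0^n(\phi_t^{-1})$, $u=u_0(\phi_t^{-1})$ from Lemma \ref{renor} and Theorem \ref{uni}, the change of variables with unit Jacobian from the divergence-free condition \eqref{con2} for the $L^p$ part, and the chain rule together with \eqref{eq-flow-flow}, \eqref{eq-cofator} and the uniform moment bound \eqref{est3} for the gradient part. Your added remark that the flow (and hence the constant $C$) is independent of $n$ because $b$ is fixed is implicit in the paper but worth making explicit.
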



\begin{proof} From Lemma \ref{renor} and Theorem \ref{uni}  we have  that
\begin{align*}
u^n(t,x)=u_0^{n}(X_{t}^{-1})\quad\text{and}\quad u(t,x)=u_0(X_{t}^{-1})\,.
\end{align*}
Then, making $y=X_{t}^{-1}(x)$ we obtain (recall that $\dive \ b =0$, that is, the Jacobian of the stochastic flow $X_t$ is identically one)
\begin{align}\label{estab1}
\int_{0}^{T} \int_{\R^{d}} \E|u^n(t,x)-u(t,x)|^{p} dx & = \int_{0}^{T} \int_{\R^{d}} \E| u_0^{n}(X_{t}^{-1}(x))- u_0(X_{t}^{-1}(x))|^{p} dx \ ds \nonumber\\[5pt]
& = T \int_{\R^{d}}  | u_0^{n}(y)- u_0(y)|^{p}   dy \,.
\end{align}
And considering \eqref{eq-flow-flow}, \eqref{eq-cofator} and \eqref{est3} we have
\begin{align}\label{estab3}
\int_{0}^{T} & \int_{\R^{d}} \mathbb{E}|D u^n(t,x)-D u(t,x)|^{p} \ dx \ ds \nonumber\\[5pt]
& =\int_{0}^{T} \int_{\R^{d}} \mathbb{E}[| Du_0^{n}(X_{t}^{-1}(x))- Du_0(X_{t}^{-1}(x))|^{p} |DX_{t}^{-1}(x)|^{p}] \  dx \ ds \nonumber\\[5pt]
& = \int_{0}^{T} \int_{\R^{d}} | Du_0^{n}(y)- Du_0(y)|^{p} \ \mathbb{E}[|DX_{t}^{-1}(X_t(y),\omega)|^{p}] \  dy \ ds \nonumber\\[5pt]
& \leq C \ \int_{\R^{d}} | Du_0^{n}(y)- Du_0(y)|^{p} dy\,.
\end{align}
So, by the calculus made above  and as $u_0^n$ converges to $u_0$ in $W^{1,p}(\R^d)$ the thesis our proposition follows.
\end{proof}

\begin{proposition}\label{strong}
Let $u_0\in W^{1,p}(\R^d) \cap W^{1,q}(\R^d)$ for some $q>p$. Then for any sequence $\{b_{n}\}\subset C^{\theta}(\R^d,\R^d)$ and $b \in C^{\theta}(\R^d,\R^d)$ satisfying the condition \eqref{con2} such that $\|b_n -b\|_{C_b^{\theta}(\R^d,\R^d)}\rightarrow 0 $ as $n\rightarrow \infty$ we have:
\begin{align*}
u^n(t,x)\quad\text{converges to}\quad u(t,x)\quad\text{in}\quad L^p([0,T] \times \Omega, W^{1,p}_{loc}(\mathbb{R}^{d}))
\end{align*}
as $n \to \infty$, where $u^n(t,x)$ and $u(t,x)$ are the unique $W^{1,p}-$weak solutions of the Cauchy problem \eqref{trasport} for the drift  $b_n$ and $b$, respectively.
\end{proposition}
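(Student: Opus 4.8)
The plan is to reduce everything to the explicit representations of the solutions provided by Lemma~\ref{renor} and Theorem~\ref{uni}. Writing $\psi^n_t:=(\phi^n_t)^{-1}$ and $\psi_t:=\phi_t^{-1}$ for the inverse (backward) flows associated to $b_n$ and $b$, the unique $W^{1,p}$-weak solutions are $u^n(t,x)=u_0(\psi^n_t(x))$ and $u(t,x)=u_0(\psi_t(x))$, so it suffices to estimate $u_0(\psi^n_t(\cdot))-u_0(\psi_t(\cdot))$ in $L^p(\Omega\times[0,T],W^{1,p}(K))$ for an arbitrary compact $K\subset\R^d$. Throughout I use that both $b_n$ and $b$ are divergence free, so each flow is pathwise measure preserving (unit Jacobian), together with the convergence and boundedness estimates \eqref{est1}, \eqref{est2}, \eqref{est3} for the backward flows. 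The crucial difference with the preceding proposition is that here \emph{two distinct flows} appear, so a single change of variables can no longer cancel the Jacobian factors; this is exactly where the extra integrability $u_0\in W^{1,q}$, $q>p$, is consumed.

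For the zeroth-order term I would argue by density. Given $\kappa>0$, choose $g\in C_c^\infty(\R^d)$ (a mollified, truncated $u_0$) with $\|u_0-g\|_{W^{1,p}}+\|u_0-g\|_{W^{1,q}}<\kappa$, and split
\[
u_0(\psi^n_t)-u_0(\psi_t)=[u_0-g](\psi^n_t)+\big(g(\psi^n_t)-g(\psi_t)\big)+[g-u_0](\psi_t).
\]
The two outer pieces are treated by the unit-Jacobian change of variables $y=\psi^n_t(x)$ (resp.\ $y=\psi_t(x)$), which bounds each by $\|u_0-g\|_{L^p(\R^d)}^p\le\kappa^p$ uniformly in $n,t,\omega$. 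The middle piece is Lipschitz, $|g(\psi^n_t(x))-g(\psi_t(x))|\le\|Dg\|_\infty\,|\psi^n_t(x)-\psi_t(x)|$, and since $x$ runs over the compact $K$ on which the weight $(1+|x|)$ is bounded, estimate \eqref{est2} forces its $L^p(\Omega\times[0,T]\times K)$-norm to $0$ as $n\to\infty$. A standard three-term ($\varepsilon/3$) scheme then yields convergence of the value part.

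The gradient part is the main obstacle. By the chain rule $D[u_0(\psi^n_t)]=Du_0(\psi^n_t)\,D\psi^n_t$, and I split
\[
Du_0(\psi^n_t)D\psi^n_t-Du_0(\psi_t)D\psi_t
=Du_0(\psi_t)\,[D\psi^n_t-D\psi_t]
+\big[Du_0(\psi^n_t)-Du_0(\psi_t)\big]D\psi^n_t.
\]
For the first summand I apply H\"older in $(x,\omega,t)$ with exponents $q/p$ and $q/(q-p)$: the factor $\int|Du_0(\psi_t)|^q$ is bounded by $\|Du_0\|_{L^q(\R^d)}^q$ after the unit-Jacobian change of variables, while the complementary factor $\int_K\E|D\psi^n_t-D\psi_t|^{pq/(q-p)}\,dx$ tends to $0$ by \eqref{est1}. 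The genuinely hard term is the second summand, because $Du_0$ is only $L^q$ and is evaluated along \emph{two different} flows, so neither a Lipschitz bound nor one change of variables applies. I again insert $g$: the smooth piece $[Dg(\psi^n_t)-Dg(\psi_t)]D\psi^n_t$ is Lipschitz, and after separating $|D\psi^n_t|^p$ by H\"older in $\omega$ (exponent $2$) it is killed by \eqref{est2} together with the uniform bound \eqref{est3}; the two pieces carrying $D(u_0-g)$ are controlled by H\"older in $x$ with exponents $q/p,\,q/(q-p)$, where the change of variables $y=\psi^n_t(x)$ turns the first factor into $\|D(u_0-g)\|_{L^q(\R^d)}^q\le\kappa^q$ and the leftover $\big(\int_K|D\psi^n_t|^{pq/(q-p)}dx\big)^{(q-p)/q}$ is bounded uniformly in $n$ by \eqref{est3} (via Jensen in $\omega$). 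Collecting these estimates, sending first $n\to\infty$ and then $\kappa\to0$, gives convergence of the gradient part, whence $u^n\to u$ in $L^p([0,T]\times\Omega,W^{1,p}_{loc}(\R^d))$.
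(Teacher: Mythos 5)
Your proposal is correct and follows essentially the same route as the paper's own proof: both represent the solutions as $u_0$ composed with the inverse flows, insert a smooth approximation of $u_0$ (the paper's $u_0^{\varepsilon}$, your $g$), exploit the unit-Jacobian change of variables together with the flow estimates \eqref{est1}--\eqref{est3} for the backward flows, and spend the extra $W^{1,q}$ integrability through H\"older with exponents $q/p$ and $q/(q-p)$ precisely on the term pairing $Du_0$ with $D\psi^n_t-D\psi_t$. Your two-stage splitting of the gradient difference, with the $L^q$ bound applied directly to $Du_0$ rather than to its mollification, is only a cosmetic reorganization of the paper's four-term decomposition (and your order of limits, first $n\to\infty$ then $\kappa\to 0$, is the cleaner reading of the paper's concluding step).
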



\begin{proof} From Lemma \ref{renor} and Theorem \ref{uni}  we obtain  that
\begin{align*}
u^n(t,x)=u_0(X_{t}^{-1,n})\quad\text{and}\quad u(t,x)=u_0(X_{t}^{-1})\,.
\end{align*}
We also consider $u^{\varepsilon,n}(t,x)=u_0^{\varepsilon}(X_{t}^{-1,n}(x))$ the unique solution of the stochastic transport equation with vector field $b^n$ and initial condition $u_0^{\varepsilon}\in C_c^{\infty}(\R^d)$ such that $u_0^{\varepsilon}$ converges to $u_0$ in $W^{1,p}(\R^d)$. We observe that $u_0^{\varepsilon}\in W^{1,p'}(\R^d)$ with $p'>d$, then by Morrey's inequality $u_0^{\varepsilon} \in C_b^{0,\beta}(\R^d)$, where $\beta=1-\frac{d}{p'}$.  In this way, if $K$ is a compact in $\R^d$ we have
\begin{align*}
\int_{0}^{T} & \int_{K} \mathbb{E}|u^n(t,x)-u(t,x)|^{p} dx ds \nonumber\\[5pt]
& \leq C \int_{0}^{T} \int_{\Omega}\int_{K} | u_0(X_{t}^{-1,n}(x))- u_0^{\varepsilon}(X_{t}^{-1,n}(x))|^{p} dx \ \mathbb{P}(d\omega) \ dt  \nonumber\\[5pt]
& \quad+C \ \int_{0}^{T} \int_{\Omega}\int_{K} | u_0^{\varepsilon}(X_{t}^{-1,n}(x))- u_0^{\varepsilon}(X_{t}^{-1}(x))|^{p} dx \ \mathbb{P}(d\omega) \ dt \nonumber\\[5pt]
& \quad+C \ \int_{0}^{T} \int_{\Omega}\int_{\R^d} | u_0^{\varepsilon}(X_{t}^{-1}(x))- u_0(X_{t}^{-1}(x))|^{p} dx \ \mathbb{P}(d\omega) \ dt \nonumber\\[5pt]
& \leq C \ \int_{K} | u_0(y)- u_0^{\varepsilon}(y)|^{p} dy \nonumber\\[5pt]
& \quad+ C \ \int_{0}^{T} \int_{\Omega}\int_{K} | X_{t}^{-1,n}(x))-X_{t}^{-1}(x)|^{\beta p} dx \ \mathbb{P}(d\omega) \ dt \nonumber\\[5pt]
& \quad + C \ \int_{\R^d} |u_0^{\varepsilon}(y)- u_0(y)|^{p} dy \,.
\end{align*}
Thus, from (\ref{est2}), doing first $\varepsilon \to 0 $ and after $n\to \infty$, we get that $u^{n}\rightarrow u$ in $L^{p}(\Omega\times [0,T], L_{loc}^{p}(\R^{d}))$. Now, we see that
\begin{align}\label{eq-last}
\int_{0}^{T} \int_{K} & \mathbb{E}[|Du^n(t,x)-Du(t,x)|^{p} ]dx ds \nonumber\\[5pt]
& \leq \int_{0}^{T} \int_{K} \mathbb{E}[| Du_0(X_{t}^{-1,n}(x))DX_{t}^{-1,n}(x) - Du_0(X_{t}^{-1}(x))DX_{t}^{-1}(x)|^{p}]   dx ds \nonumber\\[5pt]
& \leq C \int_{0}^{T} \int_{K} \mathbb{E}[| Du_0(X_{t}^{-1,n}(x)) - Du_0^{\varepsilon}(X_{t}^{-1,n}(x))|^{p} \ |DX_{t}^{-1,n}(x)|^p] \ dx \ dt  \nonumber\\[5pt]
& \quad + C \ \int_{0}^{T} \int_{K} \mathbb{E}[|Du_0^{\varepsilon}(X_{t}^{-1,n}(x)) - Du_0^{\varepsilon}(X_{t}^{-1}(x))|^{p} \ |DX_{t}^{-1,n}(x)|^p] \ dx \ dt  \nonumber\\[5pt]
& \quad + C \ \int_{0}^{T} \int_{K} \mathbb{E}[|Du_0^{\varepsilon}(X_{t}^{-1}(x))|^p \ |DX_{t}^{-1,n}(x) - DX_{t}^{-1}(x)|^{p} ] \ dx  \ dt  \nonumber\\[5pt]
& \quad + C \ \int_{0}^{T} \int_{K} \mathbb{E}[|Du_0^{\varepsilon}(X_{t}^{-1}(x))-Du_0(X_{t}^{-1}(x))|^p \ |DX_{t}^{-1}(x)|^{p} ] \ dx  \ dt \nonumber\\[5pt]
& \leq C \int_{0}^{T} \int_{K} | Du_0(y) - Du_0^{\varepsilon}(y)|^{p} \ \mathbb{E}[|DX_{t}^{-1,n}(X_{t}^{n}(y),\omega)|^p] \ dy \ dt  \nonumber\\[5pt]
& \quad + C \ \int_{0}^{T} \int_{K} \mathbb{E}[|X_{t}^{-1,n}(x) - X_{t}^{-1}(x)|^{\beta p} \ |DX_{t}^{-1,n}(x)|^p] \ dx \ dt  \nonumber\\[5pt]
&\quad + C  \bigg(\int_{0}^{T} \int_{K} \mathbb{E}[|Du_0^{\varepsilon}(X_{t}^{-1}(x))-Du_0(X_{t}^{-1}(x))|^{p p''}]  \ dx  \ dt\bigg)^{\frac{1}{p''}}  \nonumber\\[5pt]
& \quad\quad\quad \times \bigg(\ \int_{0}^{T} \int_{K}\mathbb{E}[|Du_0^{\varepsilon}(X_{t}^{-1}(x))|^q ] \ dx  \ dt \bigg)^{\frac{p}{q}} \nonumber\\[5pt]
& \quad + C \ \int_{0}^{T} \int_{K} |Du_0^{\varepsilon}(y)-Du_0(y)|^p \ \mathbb{E}[ |DX_{t}^{-1}(X_t(y),\omega)|^{p} ] \ dy  \ dt \,,
\end{align}
where $\displaystyle \frac{p}{q} + \frac{1}{p''}=1$. Thus, considering \eqref{eq-flow-flow}, \eqref{eq-cofator}, the estimations \eqref{est1}, \eqref{est2} and \eqref{est3}, and doing first $\varepsilon \to 0$ and after $n\to \infty$ we have $D u^{n}\rightarrow D u$ in $L^{p}(\Omega\times [0,T], L_{loc}^{p}(\R^{d}))$. Therefore, by the calculus made above the proof is complete.
\end{proof}

\section*{Acknowledgements}

I would like to thank the anonymous referee for your comments which help me improve this work. I also would like to thank the colleague Christian Olivera for his important suggestions in order to improve this paper.


\end{document}